%
\documentclass[a4paper,draft,reqno,12pt]{amsart}
\usepackage[utf8]{inputenc}
\usepackage[T1,T2A]{fontenc}
\usepackage[english]{babel}
\usepackage{amsmath}
\usepackage{amssymb}
\usepackage{amscd}
\usepackage{amsthm}
\usepackage{euscript}
\usepackage[all]{xy}
\newtheorem{proposition}{Proposition}

\newtheorem{lemma}{Lemma}
\newtheorem{theorem}{Theorem}

\theoremstyle{definition}
\newtheorem{definition}{Definition}
\newtheorem{example}{Example}
\theoremstyle{remark}
\newtheorem {remark}{Remark}

\DeclareMathOperator{\Spec}{Spec}

\DeclareMathOperator{\Conv}{Conv}

\def\KK{{\mathbb K}}

\def\ZZ{{\mathbb Z}}

\def\NNN{{\mathcal N}}
\def\QQ{{\mathbb Q}}

\def\NNN{\mathcal{N}}

\def\mal{\! \cdot \!}
\renewcommand{\phi}{\varphi}
\renewcommand{\ge}{\geqslant}
\renewcommand{\le}{\leqslant}

\hyphenation{bilinear}
\hyphenation{Theorem}
\sloppy
\textwidth=16.3cm
\oddsidemargin=0cm
\topmargin=0cm
\headheight=0cm
\headsep=1cm
\textheight=23.5cm
\evensidemargin=0cm
\begin{document}
\date{}
\title[Normally located polyhedra]{Normally located polyhedra}
\author{Ivan Arzhantsev}
\address{Faculty of Computer Science, HSE University, Pokrovsky Boulevard 11, Moscow, 109028 Russia}
\email{arjantsev@hse.ru}
\subjclass[2010]{Primary 11P21, 52B20; \ Secondary 14M25, 52B11}
\keywords{Polytope, polyhedron, lattice, Minkowski sum, graded algebra, toric variety} 

\maketitle
\begin{abstract}
Lattice polyhedra $Q_1$ and $Q_2$ with the same tail cone are said to be normally located if every lattice point in the Minkowski sum $Q_1+Q_2$ is the sum of lattice points from $Q_1$ and $Q_2$, respectively. We prove that if the normal fan of $Q_1$ refines the normal fan of $Q_2$, then there is a positive integer $k$ such that for any positive integer $s$
the polyhedra $skQ_1$ and $skQ_2$ are normally located. This result is based on an interpretation of the problem in terms of graded algebras and earlier results on surjectivity of the multiplicaiton map on homogeneous components. Also we provide an example of two lattice triangles $P$ and $Q$ on the plane such that for any positive integer $k$ the triangles $kP$ and $kQ$ are not normally located.
\end{abstract} 


\section{Introduction}
\label{sec1}

Let us consider the lattice $\ZZ^d$ and the rational vector space $\QQ^d$ generated by $\ZZ^d$. By a \emph{lattice polytope} we mean a convex polytope $P$ in $\QQ^d$ with vertices in $\ZZ^d$. Let us assume additionally that the lattice points in $P$ generate the lattice $\ZZ^d$; this can be achieved by replacing $\ZZ^d$ with a proper sublattice. 

It is easy to construct examples of two lattice polytopes $P$ and $Q$ such that the Minkowski sum $P+Q$ contains a lattice point that is not a sum of lattice points from $P$ and $Q$, respective; see, e.g., \cite{Oda}. Moreover, starting from dimension $3$ it may happen even when $P=Q$. 

\begin{definition}
A lattice polytope $P$ is called \emph{normal} if for every positive integer $s$ and every lattice point $z\in sP$ there are lattice points $z_1,\ldots,z_s\in P$ such that $z=z_1+\ldots+z_s$. 
\end{definition} 

Normal polytopes play an important role in many areas of modern mathematics, see~\cite{Gub} for a recent survey on this subject. In particular, such polytopes define integrally closed graded monoid algebras and projectively normal embeddings of projective toric varieties. 

Let us recall that a lattice polytope $P$ is \emph{smooth} if the primitive edge vectors at every vertex of $P$ form a basis of $\ZZ^d$. Smooth polytopes correspond to projective embeddings of smooth toric varieties. Oda's question~\cite[Section~1]{Gub} asks whether every smooth polytope is normal. This question is open in all dimensions $\ge 3$.

The proof of the following theorem may be found in~\cite[Proposition~1.3.3]{BGT} or \cite{EW}. 

\begin{theorem} \label{scnor}
For every lattice polytope $P$ in $\QQ^d$ the polytope $(d-1)P$ is normal. 
\end{theorem}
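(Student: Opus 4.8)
The plan is to reduce the statement to a one-step generation property and then to a combinatorial fact about lattice simplices obtained by triangulating $P$ and passing to cones. Assume $P$ is $d$-dimensional (otherwise replace $\QQ^d$ by the affine span of $P$ and $\ZZ^d$ by the induced lattice). The key assertion I would isolate is the one-step lemma: for every integer $n\ge d-1$,
$$((n+1)P)\cap\ZZ^d=(P\cap\ZZ^d)+((nP)\cap\ZZ^d).\qquad(\star)$$
Granting $(\star)$, the theorem follows by a grouping argument. Given $z\in s(d-1)P\cap\ZZ^d$, repeated use of $(\star)$ peels off lattice points of $P$ as long as the current dilation factor is at least $d$, writing $z=p_1+\dots+p_{(s-1)(d-1)}+w$ with each $p_i\in P\cap\ZZ^d$ and $w\in(d-1)P\cap\ZZ^d$. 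Splitting the $p_i$ into $s-1$ blocks of size $d-1$, each block sums to a lattice point of $(d-1)P$, and together with $w$ this writes $z$ as a sum of $s$ lattice points of $(d-1)P$; hence $(d-1)P$ is normal.

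To prove $(\star)$ I would fix a triangulation of $P$ into lattice simplices whose vertices lie in $P\cap\ZZ^d$ and homogenise: to a simplex $\sigma=\Conv(v_0,\dots,v_d)$ I associate the simplicial cone $C_\sigma=\cone\{(v_0,1),\dots,(v_d,1)\}\subset\QQ^{d+1}$, so that a lattice point $z\in m\sigma$ corresponds to $(z,m)\in C_\sigma\cap\ZZ^{d+1}$. Every lattice point of $C_\sigma$ is uniquely the sum of a lattice point of the half-open parallelepiped $\Pi_\sigma=\{\sum_i t_i(v_i,1):0\le t_i<1\}$ and a nonnegative integer combination of the generators $(v_i,1)$. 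Writing $(z,n+1)=(w,h)+\sum_i a_i(v_i,1)$ with $(w,h)\in\Pi_\sigma\cap\ZZ^{d+1}$ and $a_i\ge 0$, one has $0\le h\le d$ because $h=\sum_i t_i<d+1$. If some $a_i\ge 1$, then peeling off the corresponding generator $(v_i,1)$, which is a height-one lattice point and thus a point of $P\cap\ZZ^d$, leaves a lattice point of $C_\sigma$ at height $n$; this proves $(\star)$ for such $z$ and, in particular, settles all $n\ge d$ at once, since then $\sum_i a_i=n+1-h\ge 1$.

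The remaining case is $\sum_i a_i=0$, which forces $h=n+1\le d$; combined with $n\ge d-1$ this isolates the single sharp situation $n+1=d$, that is $z=\sum_i\mu_i v_i$ with $\sum_i\mu_i=d$ and all $\mu_i\in[0,1)$. This is exactly where the sharp constant $d-1$ has to be earned: the point $z$ sits at the top level of $\Pi_\sigma$, and for a non-unimodular simplex it need not be expressible as a sum of a lattice point of $P$ and a lattice point of $(d-1)P$ inside the single cone $C_\sigma$ — indeed, subtracting any vertex $v_j$ from $z$ turns the coefficient $\mu_j$ into $\mu_j-1<0$, pushing the result outside $\sigma$. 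The device I would use is the reflection symmetry of $\Pi_\sigma$: the complementary point $u=\sum_i(1-\mu_i)v_i=\sum_i v_i-z$ is again a lattice point, and since $\sum_i(1-\mu_i)=1$ it lies at height one, so $u\in P\cap\ZZ^d$ and $z+u=\sum_i v_i$. Turning this relation into an actual decomposition of $z$ requires leaving the single simplex and working globally across the triangulation, and I expect this global step to be the main obstacle; it is the heart of the Ewald--Wessels argument and the precise reason the statement fails for the naive per-simplex bound $d$ but holds for $d-1$.
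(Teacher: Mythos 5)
Your reduction of the theorem to the one-step statement $(\star)$ is correct, and your treatment of the range $n\ge d$ via the homogenization $C_\sigma$ and the half-open parallelepiped $\Pi_\sigma$ is sound: whenever some $a_i\ge 1$ you can peel off a vertex, and the grouping argument then correctly converts $(\star)$ into normality. But the proposal has a genuine gap, and you name it yourself: the single remaining case $n+1=d$, where $(z,d)$ lies in $\Pi_\sigma$ with all barycentric coordinates $\mu_i\in[0,1)$ and $\sum_i\mu_i=d$, is never resolved. The reflection trick only produces the lattice point $u=\sum_i v_i-z\in P\cap\ZZ^d$; the candidate complement $z-u=2z-\sum_i v_i$ has coefficients $2\mu_i-1$, which are negative whenever $\mu_i<1/2$, so it need not lie in $(d-1)\sigma$, and nothing in the proposal shows it (or any other complement) lies in $(d-1)P$ globally. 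You explicitly defer this step as ``the heart of the Ewald--Wessels argument'' without supplying it --- but that step is precisely where the theorem lives. What the completed portion of your argument actually proves is $(\star)$ for $n\ge d$, which by your own grouping scheme yields only that $dP$ is normal; the improvement from $d$ to the sharp constant $d-1$ is exactly the missing case, so the stated theorem is not proved.

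For context: the paper itself gives no proof of this theorem, citing \cite{BGT} (Proposition~1.3.3) and \cite{EW} instead, and your argument faithfully reproduces the standard opening moves of those proofs (triangulate, homogenize, decompose over $\Pi_\sigma$, peel generators). The cited sources then do additional, genuinely nontrivial work to dispose of the top-box case --- one cannot stay inside a single simplex there, as your observation that subtracting any vertex $v_j$ drives the coefficient $\mu_j$ negative already shows. To turn your proposal into a proof you must either import that argument or find a substitute for it; as written, the proposal is an accurate map of the problem with its hardest point marked but not crossed.
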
 

In particular, in dimension $2$ any lattice polygon is normal. 

\smallskip

The aim of this paper is to generalize the property of normality to a pair of polytopes. 

\begin{definition}
Lattice polytopes $P$ and $Q$ in $\QQ^d$ are said to be \emph{normally located} if for every lattice point $z\in P+Q$ there are lattice points $z'\in P$ and $z''\in Q$ such that $z=z'+z''$. 
\end{definition}

Oda's Conjecture (see \cite{Oda} or \cite[Section~1]{HNPS}) states the following. Let $P$ and $Q$ be lattice polytopes. Assume that $P$ is smooth and the normal fan of $Q$ refines that of $P$. Then the polytopes $P$ and $Q$ are normally located. This is a generalization of Oda's question on normality of a smooth polytope.  

In~\cite[Theorem~1.1]{HNPS}, it is shown that if $P$ and $Q$ are lattice polygons such that the normal fan of $Q$ refines that of $P$, then $P$ and $Q$ are normally located. In this paper we prove a version of Oda's Conjecture (Theorem~\ref{mcrit}) which generalizes~\cite[Theorem~1.1]{HNPS} to higher dimensions. The result is obtained not only for polytopes, but also for polyhedra with the same tail cone.

\smallskip

The paper is organized as follows. In Section~\ref{sec2} we give an explicit example of two lattice triangles $P$ and $Q$ on the plane such that for any positive integer $k$ the triangles $kP$ and $kQ$ are not normally located. This example is intended to demonstrate that, unlike the normality property, the property of normal location cannot always be achieved just by rescaling two given polytopes.

In Section~\ref{sec3} we recall basic definitions and facts on polyhedra, their Minkowski sums and normal fans. Also we consider polyhedra that appear as fibers of a projection of the positive octant in a bigger lattice to a smaller lattice. Section~\ref{sec3-1} is devoted to interpretations of the objects defined above in terms of graded algebras and the multiplication map on homogeneous components. We recall the results of~\cite{AH} that allow to relate the property of normal location (up to scalar) of a pair of polyhedra in fibers over two points with the location of these points with respect to the so-called GIT-fan. Finally, in Section~\ref{sec4} we explain how to realize a pair of polyhedra via the fiber construction and prove our main result (Theorem~\ref{mcrit}).

\section{An example in dimension $2$}
\label{sec2}

Let us show that, in contrast to Theorem~\ref{scnor}, for two lattice triangles absence of the property of normal location is not just a question of scale. 

\begin{proposition} \label{exm} 
There are two lattice triangles $P$ and $Q$ in $\QQ^2$ such that for every positive integer $k$ the triangles $kP$ and $kQ$ are not normally located.
\end{proposition}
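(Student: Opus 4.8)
The plan is to find a single lattice point that defeats normal location for \emph{every} $k$ at once, by exploiting the local structure of the Minkowski sum at a vertex. The key observation is that if $v_P,v_Q$ are vertices of $P,Q$ with $v_P+v_Q$ a vertex of $P+Q$, then the tangent cone of $P+Q$ at $v_P+v_Q$ is the Minkowski \emph{sum} $C_P+C_Q$ of the tangent cones $C_P,C_Q$, whereas $kP\subseteq C_P$ and $kQ\subseteq C_Q$ for every $k$. Consequently, if $\delta$ is a lattice point of $C_P+C_Q$, then any decomposition $\delta=z'+z''$ with $z'\in kP$, $z''\in kQ$ forces $z'\in C_P\cap(\delta-C_Q)=:R_0$, a bounded region \emph{independent of} $k$. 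So it suffices to produce cones $C_P,C_Q$ (with apexes at a common vertex $v_P=v_Q=0$) and a lattice point $\delta\in(C_P+C_Q)\cap P+Q$ for which $R_0$ contains no lattice point; then $\delta\in k(P+Q)$ admits no lattice decomposition for all $k\ge 1$.

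Concretely, I would take $P=\Conv\{(0,0),(3,1),(5,1)\}$ and $Q=\Conv\{(0,0),(1,3),(1,5)\}$, so that $C_P=\cone((3,1),(5,1))=\{3y\le x\le 5y\}$ and $C_Q=\cone((1,3),(1,5))=\{3x\le y\le 5x\}$. Both cones are singular (the edge generators have determinant $2$), and the normal fans of $P$ and $Q$ share no ray, so neither refines the other, as the necessary direction of~\cite[Theorem~1.1]{HNPS} demands. I set $\delta=(1,1)$. First one checks $\delta\in P+Q$: writing $\delta=(\tfrac45,\tfrac15)+(\tfrac15,\tfrac45)$ with $(\tfrac45,\tfrac15)=\tfrac45(0,0)+\tfrac1{10}(3,1)+\tfrac1{10}(5,1)\in P$ and, symmetrically, $(\tfrac15,\tfrac45)\in Q$, gives $\delta\in P+Q\subseteq k(P+Q)$ for every $k\ge 1$ (in fact $\delta$ is interior).

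For non-decomposability I would compute $R_0=C_P\cap(\delta-C_Q)$. From $C_Q=\{3x\le y\le 5x\}$ one gets $\delta-C_Q=\{x\le 1,\ 5x-4\le y\le 3x-2\}$, so on $R_0$ one has $x\le 1$ and, using $C_P=\{3y\le x\le 5y\}$, also $y\le x/3\le \tfrac13$; moreover $y>0$ on $R_0$, since the only point of $C_P$ with $y=0$ is the origin, and the origin does not lie in $\delta-C_Q$ because $\delta\notin C_Q$ (indeed $(1,1)$ fails $3x\le y$). Hence $R_0\subseteq\{0<y\le\tfrac13\}$ contains no lattice point. By the reduction of the first paragraph, $\delta=(1,1)$ has no decomposition $z'+z''$ with $z'\in kP\cap\ZZ^2$ and $z''\in kQ\cap\ZZ^2$ for any $k$, so $kP$ and $kQ$ are never normally located.

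The main obstacle—and the reason the statement is genuinely nontrivial despite Theorem~\ref{scnor}—is producing the lattice-free region $R_0$, equivalently exhibiting a lattice point $\delta$ in the gap between the additive semigroup $(C_P\cap\ZZ^2)+(C_Q\cap\ZZ^2)$ and its saturation $(C_P+C_Q)\cap\ZZ^2$. This is impossible when the cones are smooth: then $C_P\cap\ZZ^2$ consists of the nonnegative integer combinations of the two edge generators, and together with the generators of $C_Q$ they sum to fill every lattice point of $C_P+C_Q$, leaving no gap—matching the expectation (Oda) that smooth polytopes cause no trouble. Thus the construction is forced to use singular cones, and the crux of the argument is the verification that the chosen $\delta$ really escapes the semigroup sum, which the elementary strip estimate $R_0\subseteq\{0<y\le\frac13\}$ accomplishes uniformly in $k$.
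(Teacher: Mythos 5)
Your proof is correct, and it takes a genuinely different route from the paper's. The paper extracts its triangles $P=\Conv((165,0),(175,0),(0,385))$ and $Q=\Conv((0,0),(35,0),(0,77))$ from the graded-algebra Example~\ref{oldex} via the fiber construction of Section~\ref{sec3}, and for each $k$ exhibits a $k$-dependent witness $(1,385k-2)\in kP+kQ$, killing all decompositions by a short integer-arithmetic case analysis on the possible first coordinates. You instead localize at a common vertex: placing both triangles with a vertex at the origin, you use the scale invariance of tangent cones ($kP\subseteq C_P$ and $kQ\subseteq C_Q$ for all $k$) to reduce everything to the single fixed lattice point $\delta=(1,1)$, and you rule out all decompositions uniformly in $k$ by showing that $R_0=C_P\cap(\delta-C_Q)$ lies in the lattice-free strip $\{0<y\le \tfrac13\}$. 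I checked the details: $(1,1)=(\tfrac45,\tfrac15)+(\tfrac15,\tfrac45)\in P+Q$, and since $0\in P+Q$, indeed $\delta\in k(P+Q)=kP+kQ$ for every $k\ge 1$; the computation $\delta-C_Q=\{5x-4\le y\le 3x-2\}$ forces $x\le 1$, which together with $3y\le x$ on $C_P$ and the exclusion of the origin (as $(1,1)\notin C_Q$) gives $0<y\le\tfrac13$ on $R_0$, so $R_0\cap\ZZ^2=\emptyset$ and no decomposition $\delta=z'+z''$ with $z'\in kP\cap\ZZ^2$, $z''\in kQ\cap\ZZ^2$ can exist. Your approach buys a witness independent of $k$, smaller triangles, and a transparent conceptual source of the obstruction, namely failure of saturation of the semigroup $(C_P\cap\ZZ^2)+(C_Q\cap\ZZ^2)$ inside $(C_P+C_Q)\cap\ZZ^2$ for a pair of singular cones; the paper's heavier example, by contrast, is designed to display the connection with the GIT-fan machinery (Theorem~\ref{maina} and Example~\ref{oldex}) that drives its positive results. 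Two harmless remarks: your opening claim that the tangent cone of $P+Q$ at $v_P+v_Q$ is $C_P+C_Q$ is purely motivational (the argument only uses $kP\subseteq C_P$, $kQ\subseteq C_Q$, and $\delta\in P+Q$), and your closing heuristic that smooth cones leave no gap is asserted without proof, but nothing in the argument depends on either.
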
 

\begin{proof}
Take $P=\Conv(a_1,a_2,a_3)$ and $Q=\Conv(b_1,b_2,b_3)$, where
\smallskip
$$
a_1=(165,0), \ a_2=(175,0), \ a_3=(0,385) \quad \text{and} \quad b_1=(0,0), \ b_2=(35,0), \ b_3=(0,77). 
$$
\smallskip
Equivalently, the triangle $P$ is given by inequalities
$$
7x+3y\ge 1155, \quad 11x+5y\le 1925, \quad y\ge 0
$$
and $Q$ is given by 
$$
11x+5y\le 385, \quad x\ge 0, \quad y\ge 0.  
$$
The Minkowski sum $P+Q$ is $\Conv(c_1,c_2,c_3,c_4)$, where
$$
c_1=(165,0), \quad c_2=(210,0), \quad c_3=(0,385), \quad c_4=(0,462). 
$$
The inequalites that determine $P+Q$ are
$$
7x+3y\ge 1155, \quad 11x+5y\le 2310, \quad x\ge 0, \quad y\ge 0. 
$$
It is easy to check that for every positive integer $k$ the point $s=(1,385k-2)$ is contained in $kP+kQ$. 
Assume that $s=p+q$, where $p\in kP\cap\ZZ^2$ and $q\in kQ\cap\ZZ^2$. Then only two cases are possible. 

\medskip

{\it Case 1}. \ Let $p=(1,a)$ and $q=(0,385k-2-a)$ for some non-negative integer $a$. Since $p$ is contained in $P$, we have
$$
7+3a\ge 1155 k \quad \text{and} \quad 11+5a\le 1925k. 
$$
These inequalities may be rewritten as
$$
a\ge 385k -2 - \frac{1}{3} \quad \text{and} \quad a \le 385k -2 - \frac{1}{5}.
$$
Since $a$ is integer, we come to a contradiction. 

\medskip

{\it Case 2}. \ Let $p=(0,a)$ and $q=(1,385k-2-a)$ for some non-negative integer $a$. Since $p$ is contained in $P$, we have
$$
3a\ge 1155 k \quad \text{and} \quad 5a\le 1925k,
$$
so
$$
a\ge 385 k \quad \text{and} \quad a\le 385 k. 
$$
We conclude that $a=385k$. Since $q$ lies in $Q$, we have
$$
385k-2-a=-2>0.
$$
These two contradictions complete the proof. 
\end{proof} 


\section{Generalities on polyhedra} 
\label{sec3} 

By a \emph{polyhedron} in $\QQ^d$ we mean the intersection of finitely many closed affine half spaces.  A \emph{lattice polyhedron} is a polyhedron in $\QQ^d$ whose vertices are in $\ZZ^d$. Note that a polytope can be defined as a bounded polyhedron in $\QQ^d$. For a polyhedron $Q$ we define its \emph{relative interior} as the set obtained by removing all proper faces from $Q$. Let us denote the relative interior of $Q$ by $Q^\circ$. Further, a \emph{cone} in $\QQ^d$ is the intersection of finitely many closed linear half spaces. A cone is \emph{pointed} if it contains no line. If a cone has dimension at least $2$, it is pointed if and only if it is generated by its one-dimensional faces.

The set of all polyhedra in $\QQ^d$ comes with a natural structure of a commutative semigroup: one defines the Minkowski sum of two polyhedra $Q_1$ and $Q_2$ to be the polyhedron
$$
Q_1+Q_2 := \{w_1+w_2; \ w_1\in Q_1, w_2\in Q_2\}.
$$
In the same way one may define the Minkowski sum of two arbitrary subsets in $\QQ^d$. 

Any polyhedron allows a Minkowski sum decomposition $Q=P+\sigma$, where $P$ is a polytope and $\sigma$ is a cone in $\QQ^d$. In this decomposition, the \emph{tail cone} 
$\sigma$ is unique; it is given by
$$
\sigma=\{w\in\QQ^d; \ w'+tw\in Q \ \text{for all} \ w'\in Q, t\in\QQ_{\ge 0}\}.
$$

A polyhedron with the tail cone $\sigma$ is called a \emph{$\sigma$-polyhedron}. For example, polytopes are precisely $\sigma$-polyhedra with $\sigma=\{0\}$. 

It is easy to check that for a fixed cone $\sigma$ the set of all $\sigma$-polyhedra forms a commutative semigroup with respect to the Minkowski sum. 

\bigskip 

Let us recall that with any $\sigma$-polyhedron $Q$ in $\QQ^d$ one may associate the normal fan $\NNN(Q)$: any point $v\in Q$ defines the cone $\tau(v)$ consisting of all linear functions
on $\QQ^d$ which reach their maximal value on $Q$ at the point $v$. The collection of cones $\NNN(Q)=\{\tau(v)\, |\, v\in Q\}$ is finite and it is a fan in a sense that a face of any cone in 
$\NNN(Q)$ is contained in $\NNN(Q)$ and the intersection of any two cones in $\NNN(Q)$ is a face of each of them. Moreover, all cones in $\NNN(Q)$ are pointed if and only if $Q$ has full dimension in $\QQ^d$. 

The \emph{support} of a fan $\NNN$ is the union of all cones in $\NNN$. The support of the normal fan $\NNN(Q)$ equals the dual cone
$$
\sigma^{\lor}:=\{ l \in (\QQ^d)^* \ | \ l|_\sigma \ge 0\}. 
$$

We say that a fan $\NNN_1$ \emph{refines} a fan $\NNN_2$, if every cone in $\NNN_1$ is contained in some cone in~$\NNN_2$. Let $\NNN_1$ and $\NNN_2$ be two fans with the same support. The \emph{coarsest common refinement} of $\NNN_1$ and $\NNN_2$ is the fan $\NNN$ with the same support, whose cones are $\tau_1\cap \tau_2$, where $\tau_1\in\NNN_1$ and $\tau_2\in \NNN_2$. 

It is well-known that for normal fans $\NNN(Q_1)$ and $\NNN(Q_2)$ of two $\sigma$-polyhedra $Q_1$ and $Q_2$ the coarsest common refinement $\NNN$ coincides with the normal fan 
$\NNN(Q_1+Q_2)$. 

\bigskip

Now let us consider a surjective homomorphism of lattices $\pi\colon \ZZ^n\to \ZZ^m$ and the induced linear map of vector spaces $\pi\colon \QQ^n\to \QQ^m$. We denote by 
$\QQ^n_{\ge 0}$ the cone of vectors in $\QQ^n$ with non-negative coordinates and let $C:=\pi(\QQ^n_{\ge 0})\subseteq\QQ^m$. Consider the cone 
$\sigma:=\pi^{-1}(0)\cap\QQ^n_{\ge 0}$. With any point $u\in C$ one associates the polyhedron $P(u):=\pi^{-1}(u)\cap\QQ^n_{\ge 0}$. 

\begin{lemma}
For any $u\in C$ the polyhedron $P(u)$ is a $\sigma$-polyhedron.
\end{lemma}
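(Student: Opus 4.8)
The plan is to verify directly, from the definition of the tail cone given above, that the unique tail cone of $P(u)$ equals $\sigma$. First I would check that $P(u)$ is a nonempty polyhedron: the affine subspace $\pi^{-1}(u)$ is an intersection of finitely many hyperplanes, hence of finitely many closed affine half spaces, and $\QQ^n_{\ge 0}$ is a polyhedron, so their intersection $P(u)$ is again a polyhedron; it is nonempty because $u\in C=\pi(\QQ^n_{\ge 0})$ provides some $w\in\QQ^n_{\ge 0}$ with $\pi(w)=u$, that is, $w\in P(u)$. Writing $\tau$ for the tail cone of $P(u)$, the remaining task is to prove the two inclusions $\sigma\subseteq\tau$ and $\tau\subseteq\sigma$.

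For the inclusion $\sigma\subseteq\tau$, I would take $w\in\sigma$, so that $\pi(w)=0$ and $w\in\QQ^n_{\ge 0}$. Then for any $w'\in P(u)$ and any $t\in\QQ_{\ge 0}$ one computes $\pi(w'+tw)=\pi(w')+t\pi(w)=u$, while $w'+tw\in\QQ^n_{\ge 0}$ as a sum of vectors with non-negative coordinates; hence $w'+tw\in P(u)$. By the displayed definition of the tail cone this means exactly that $w\in\tau$.

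For the reverse inclusion $\tau\subseteq\sigma$, I would fix some $w'\in P(u)$ and take $w\in\tau$, so that $w'+tw\in P(u)$ for all $t\in\QQ_{\ge 0}$. Applying $\pi$ gives $u+t\pi(w)=u$ for every such $t$, whence $\pi(w)=0$. Working coordinatewise, the condition $w'+tw\in\QQ^n_{\ge 0}$ for all $t\ge 0$ forces $w_i\ge 0$ for each coordinate $i$, since otherwise $w'_i+tw_i$ would become negative for large $t$; thus $w\in\QQ^n_{\ge 0}$, and therefore $w\in\pi^{-1}(0)\cap\QQ^n_{\ge 0}=\sigma$.

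Both steps are elementary, so I do not expect a genuine obstacle; the only point requiring a little care is the reverse inclusion, where one must exploit that the defining condition of the tail cone holds for \emph{all} $t\in\QQ_{\ge 0}$ in order to pass from non-negativity of the whole ray $w'+tw$ to non-negativity of $w$ itself. Conceptually this is just the statement that the tail (recession) cone of the intersection of the affine space $\pi^{-1}(u)$ with the cone $\QQ^n_{\ge 0}$ is the intersection $\pi^{-1}(0)\cap\QQ^n_{\ge 0}$ of their respective recession cones.
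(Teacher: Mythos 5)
Your proposal is correct and follows essentially the same route as the paper: it verifies directly, via the paper's displayed characterization of the tail cone, that $w'+tw\in P(u)$ for all $w'\in P(u)$ and $t\in\QQ_{\ge 0}$ holds exactly when $w\in\QQ^n_{\ge 0}$ and $\pi(w)=0$, i.e.\ $w\in\sigma$. You merely spell out two points the paper leaves implicit (nonemptiness of $P(u)$ and the coordinatewise argument forcing $w\in\QQ^n_{\ge 0}$), which is fine but not a different method.
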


\begin{proof}
For a vector $w\in\QQ^n$, the condition $w'+tw\in P(u)$ for all $w'\in P(u)$ and all $t\in\QQ_{\ge 0}$ means that $w\in\QQ^n_{\ge 0}$ and
$u=\pi(w'+tw)=\pi(w')+\pi(tw)=u+t\pi(w)$. It is equivalent to $w\in\QQ^n_{\ge 0}$ and $\pi(w)=0$, or $w\in\sigma$.  
\end{proof} 

Clearly, for every $u\in C$ there is a positive integer $r$ such that $P(ru)$ is a lattice polyhedron. Also it is easy to check that $P(u_1)+P(u_2)$ is contained in $P(u_1+u_2)$ for all
$u_1,u_2\in C$. 

\smallskip

We are interested in the following three properties of a pair $(u_1,u_2)$ with $u_1,u_2\in C$: 

\bigskip

(P1) \ \ $P(u_1)+P(u_2)=P(u_1+u_2)$; 

\bigskip

(P2) \ \ $(P(u_1) \cap \ZZ^n) + (P(u_2) \cap \ZZ^n)= P(u_1+u_2) \cap \ZZ^n$; 

\bigskip

(P3) \ \ there exists $k\in\ZZ_{>0}$ such that for any $s\in\ZZ_{>0}$ we have 
$$
(P(sku_1) \cap \ZZ^n) + (P(sku_2) \cap \ZZ^n)= P(sk(u_1+u_2)) \cap \ZZ^n.
$$ 

\section{Graded algebras} 
\label{sec3-1} 

In this section we introduce an algebraic interpretation of the objects discussed above. The projection $\pi\colon \ZZ^n\to \ZZ^m$ gives rise to an effective $\ZZ^m$-grading on the polynomial algebra $A:=\KK[x_1,\ldots,x_n]$. Namely, we put $\deg(x_i)=\pi(e_i)$, where $e_1,\ldots,e_n$ is the standard basis of the lattice $\ZZ^n$. For further purposes we assume the ground field $\KK$ to be an algebraically closed field of characteristic zero. 

\begin{remark}
The Linearization Problem~\cite{KR} claims that up to automorphism any effective $\ZZ^m$-grading on $\KK[x_1,\ldots,x_n]$ is obtained this way. 
\end{remark}

Below we follow the presentation given in~\cite{AH} in a somewhat more general situation. Let $A$ be an associative, commutative, integral, finitely generated algebra with unit over~$\KK$.
Suppose that $A$ is graded by the lattice $\ZZ^m$, i.e., we have
\begin{eqnarray*}
A
& = &
\bigoplus_{u \in \ZZ^m} A_u \ \ \ \  \text{with} \ \ \ \  A_{u_1}\cdot A_{u_2} \subseteq A_{u_1+u_2}. 
\end{eqnarray*}
By the \emph{weight cone} of $A$ we mean the cone $C(A) \subseteq \QQ^m$ generated by all $u \in \ZZ^m$ with $A_u \ne 0$. We investigate the following problem:
given $u_1,u_2 \in C(A) \cap \ZZ^m$, does there exist an integer $k > 0$ such that for any $s > 0$ the multiplication map 
$$ 
A_{sku_1} \otimes_\KK A_{sku_2}
\ \to \ 
A_{sk(u_1+u_2)},
\qquad
f \otimes g
\ \mapsto \
fg
$$
is surjective? We call a pair $u_1,u_2 \in C(A) \cap \ZZ^m$ \emph{generating} if it has this property. If $A$ is a polynomial algebra with $\ZZ^m$-grading given by the projection $\pi$, this is precisely property (P3) for a pair $(u_1,u_2)$ of lattice points in $C$. 

Let us  recall from~\cite{BH} the concept of the GIT-fan associated to a graded algebra. A~$\ZZ^m$-grading on $A$ defines an action of the torus $T := \Spec(\KK[\ZZ^m])$ on $X := \Spec(A)$ such that for any $u \in \ZZ^m$, the elements $f \in A_u$ are precisely the semiinvariants of the character $\chi^u \colon T \to \KK^*$, i.e., each $f \in A_u$ satisfies
\begin{eqnarray*}
f(t \mal x)
& := & 
\chi^u(t) f(x).
\end{eqnarray*}
The \emph{orbit cone} of a (closed) point $x \in X$ is the cone $\omega(x) \subseteq \QQ^m$ generated by all $u \in C(A)$ admitting an $f \in A_u$ with $f(x) \ne 0$.
The collection of orbit cones is finite, and thus one may associate to any element $u \in C(A)$ its \emph{GIT-cone}:
\begin{eqnarray*}
\lambda(u)
& := & 
\bigcap_{\substack{x \in X, \\ u \in \omega(x)}} \omega(x).
\end{eqnarray*}
These GIT-cones cover the weight cone $C(A)$ and, by~\cite[Theorem~3.11]{BH}, the collection $\Lambda(A)$ of all GIT-cones is a fan in the sense that if $\lambda \in \Lambda(A)$ then 
also every face of $\lambda$ belongs to $\Lambda(A)$, and for $\tau,\lambda \in \Lambda(A)$ the intersection $\tau \cap \lambda$ is a face of both $\lambda$ and $\tau$. 
Note that we allow here a fan to have cones containing lines.  

\begin{theorem} \cite[Theorem~1.1]{AH} \label{maina}
\begin{enumerate}
\item
If $u_1,u_2 \in C(A) \cap \ZZ^m$ is a generating pair, then the weights $u_1,u_2$ lie in a common  GIT-cone $\lambda \in \Lambda(A)$.
\item
If $u_1,u_2 \in C(A) \cap \ZZ^m$ lie in a common GIT-cone $\lambda \in \Lambda(A)$ and $u_1$ belongs  to the relative interior $\lambda^\circ$, then $(u_1,u_2)$ is a generating pair.
\end{enumerate}
\end{theorem}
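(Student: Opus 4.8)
The plan is to translate the algebraic condition of being a generating pair into the language of Geometric Invariant Theory and to read off the GIT-fan from the semistable loci. For $u\in C(A)\cap\ZZ^m$ set $X^{\mathrm{ss}}(u):=\{x\in X: u\in\omega(x)\}$; since $\KK$ is algebraically closed, a short argument shows that $x\in X^{\mathrm{ss}}(u)$ if and only if some $f\in A_{nu}$ with $n>0$ satisfies $f(x)\neq0$, so that $X^{\mathrm{ss}}(u)$ is the usual $T$-semistable locus for the linearization $\chi^u$. As there are only finitely many orbit cones, the GIT-cone $\lambda(u)=\bigcap_{u\in\omega(x)}\omega(x)$ governs this locus: $X^{\mathrm{ss}}(u)$ is constant as $u$ ranges over the relative interior $\lambda^\circ$ of a fixed $\lambda\in\Lambda(A)$, and it can only grow when $u$ passes to a proper face. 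Writing $q_u\colon X^{\mathrm{ss}}(u)\to Y_u:=\operatorname{Proj}\bigoplus_{n\ge0}A_{nu}$ for the corresponding good quotient, which is projective over the affine quotient $\Spec A_0$, the graded pieces $A_{nu}$ are recovered, for $n$ divisible enough, as global sections $H^0(Y_u,\OOO(n))$ of the ample sheaf defining $Y_u$. This dictionary is the backbone of both implications.

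For part~(1) I would argue directly. Suppose $(u_1,u_2)$ is generating with parameter $k$, and let $x\in X^{\mathrm{ss}}(u_1+u_2)$. Choosing $f\in A_{n(u_1+u_2)}$ with $f(x)\neq0$ and replacing $f$ by $f^k$, we may assume $f\in A_{sk(u_1+u_2)}$ for some $s>0$. Surjectivity of the multiplication map writes $f=\sum_i g_i h_i$ with $g_i\in A_{sku_1}$ and $h_i\in A_{sku_2}$; since $f(x)\neq0$, some summand satisfies $g_i(x)h_i(x)\neq0$, whence $u_1,u_2\in\omega(x)$. Thus every orbit cone containing $u_1+u_2$ contains both $u_1$ and $u_2$, and intersecting over all such $x$ gives $u_1,u_2\in\lambda(u_1+u_2)$. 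Hence $u_1$ and $u_2$ lie in the common GIT-cone $\lambda(u_1+u_2)\in\Lambda(A)$.

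For part~(2) the hypothesis $u_1\in\lambda^\circ$, $u_2\in\lambda$ forces $u_1+u_2\in\lambda^\circ$, so by the dictionary $X^{\mathrm{ss}}(u_1)=X^{\mathrm{ss}}(u_1+u_2)=:W\subseteq X^{\mathrm{ss}}(u_2)$, with a single quotient $Y:=W/\!/T$. I would then pass entirely to $Y$. The weights $u_1$ and $u_1+u_2$ define line bundles $L_1,L_{12}$ that are ample on $Y$, as they polarize the chamber $\lambda$, while $u_2\in\lambda$ yields a contraction $Y\to Y_{u_2}$ through which $L_2$ is pulled back from an ample class, so that $L_2$ is nef and globally generated and, by the projection formula, $A_{nu_2}=H^0(Y,L_2^{\,n})$ for $n$ divisible enough. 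Fixing $k$ so large that all three identifications $A_{nu_i}=H^0(Y,L_i^{\,n})$ hold on the semigroup $k\ZZ_{>0}$ and that $L_1^{\,k}$ is normally generated, surjectivity of
\[
A_{sku_1}\otimes_\KK A_{sku_2}\longrightarrow A_{sk(u_1+u_2)}
\]
for every $s>0$ becomes surjectivity of $H^0(Y,L_1^{\,sk})\otimes H^0(Y,L_2^{\,sk})\to H^0(Y,(L_1L_2)^{\,sk})$, which is a standard consequence of Castelnuovo--Mumford regularity for the product of an ample, normally generated bundle with a globally generated one.

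The main obstacle is the bookkeeping underlying this last reduction rather than the surjectivity statement itself. Two points require care. First, the graded components $A_{nu}$ must genuinely agree with the section spaces $H^0(Y,L^{\,n})$; this rests on finite generation of the Veronese subalgebras $\bigoplus_s A_{sku}$ and on choosing $k$ divisible enough that these algebras are generated in degree one and saturated, which is exactly where integrality of $A$ enters. Second, membership $u_2\in\lambda$ must indeed produce the contraction $Y\to Y_{u_2}$ with $L_2$ nef and globally generated; this is precisely the variation-of-GIT description of the fan $\Lambda(A)$ as the chamber decomposition attached to the quotient $Y$. Once these structural facts are in place, the positivity needed to invoke the multiplication theorem is automatic.
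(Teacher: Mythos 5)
Your part~(1) is correct, and it is essentially the standard argument: a generating pair trivially makes the subalgebra $A(u_1,u_2)=\oplus_r A_{ru_1}\cdot A_{ru_2}$ big in $A(u_1+u_2)$, and bigness forces a common GIT-cone (this is the content of Proposition~\ref{oneprop}, following \cite{AH}). The gap is in part~(2), and it sits exactly at the second point you yourself flag as ``requiring care'': the identification $A_{nu_2}=H^0(Y,L_2^{\,n})$ for $n$ divisible enough. Since $L_2$ is pulled back along the contraction $q\colon Y\to Y_{u_2}$, that identification amounts to $q_*\OOO_Y=\OOO_{Y_{u_2}}$ (plus saturation on $Y_{u_2}$), equivalently to surjectivity of the restriction of $\chi^{nu_2}$-semiinvariants from $X$ to $W=X^{\mathrm{ss}}(u_1)$. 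But $A$ is only assumed integral, not normal, so the quotients need not be normal and the proper map $q$ can strictly enlarge section spaces; no choice of $k$ ``divisible enough'' repairs this, because the defect persists in \emph{all} degrees, unlike the saturation defect of a single Veronese algebra.

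A concrete instance: let $A=\KK[t^2,t^3,x,tx]\subseteq\KK[t,x]$ with the $\ZZ$-grading $\deg t=0$, $\deg x=1$. Here $u_1=1$ lies in the relative interior of the GIT-cone $\lambda=\QQ_{\ge 0}$ and $u_2=0\in\lambda$, so the hypotheses of part~(2) hold. One computes $A_r=x^r\KK[t]$ for $r\ge 1$, $W=\{x\ne 0\}$, $Y=\operatorname{Proj}A(u_1)\cong\Spec\KK[t]$, while $Y_{u_2}=\Spec A_0=\Spec\KK[t^2,t^3]$ is the cuspidal curve and $q$ is its normalization; hence $H^0(Y,L_2^{\,n})=\KK[t]\supsetneq\KK[t^2,t^3]=A_{nu_2}$ for \emph{every} $n$. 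Consequently, surjectivity of $H^0(Y,L_1^{\,sk})\otimes H^0(Y,L_2^{\,sk})\to H^0(Y,L_{12}^{\,sk})$ does not control the image of $A_{sku_1}\otimes A_{sku_2}$: your Castelnuovo--Mumford step consumes sections of $L_2$ that need not come from $A$, and replacing a full section space by the subspace coming from $A_{sku_2}$ is not covered by the standard regularity statements. (Normalizing $X$ does not help, since it changes the graded pieces $A_{nu}$.) This is precisely the phenomenon that \cite{AH} must confront: their Theorem~1.5 characterizes generating pairs on a common cone through normality of the image of a morphism of quotient spaces, and their proof of part~(2) --- which the present paper only cites --- proceeds algebraically through the inclusion $A(u_1,u_2)\subseteq A(u_1+u_2)$ rather than through section-ring identifications on a common quotient. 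A secondary, repairable point: your uniformity ``one $k$ for all $s$'' in the regularity step needs a Fujita-type vanishing statement relative to $\Spec A_0$, not just Serre vanishing; but that is minor compared to the normality gap.
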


If two weights $u_1,u_2 \in C(A) \cap \ZZ^m$ lie on the boundary of a common GIT-cone $\lambda \in \Lambda(A)$, then no general statement in terms of the GIT-fan is possible:
it may happen that $u_1,u_2$ is generating, and also it may happen that $u_1,u_2$ is not generating. For the first case there are obvious examples, and for the latter we present the following one.

\begin{example} \cite[Example~1.2]{AH} \label{oldex}
Consider the polynomial ring $A := \KK[x_1,x_2,x_3,x_4]$. Then one may define a $\ZZ^2$-grading on $A$ 
by setting
$$ 
\deg(x_1) \ := \ (4,1),
\quad
\deg(x_2) \ := \ (2,1),
\quad
\deg(x_3) \ := \ (1,2),
\quad
\deg(x_4) \ := \ (1,3).
$$
The pair $u_1 := (2,1)$ and $u_2 := (1,2)$ is contained in a common GIT-cone but it is not generating:
one checks directly that the monomials $x_1x_2^{s-2}x_3^{s-1}x_4 \in A_{s(u_1+u_2)}$ can never be obtained by multiplying elements from
$A_{su_1}$ and $A_{su_2}$. 
\end{example}

\begin{remark}
In~\cite[Theorem~1.5]{AH}, a criterion for a pair of weights $(u_1,u_2)$ in one GIT-cone to be generating is given in terms of normality of the image of a morphism between certain quotient spaces. 
\end{remark}

The proof of~\cite[Theorem~1.1]{AH} is based on several propositions. A modification of one of them will be used below. To formulate this modification, we need some more notions from the theory of graded algebras.

A subalgebra $B$ of a graded algebra $A = \oplus_{u \in \ZZ^m} A_u$ is \emph{homogeneous} if $B$ is the direct sum of intersections of $B$ with homogeneous components of $A$. Every homogeneous subalgebra in $A$ inherits a $\ZZ^m$-grading. 

With any weight $u\in\ZZ^m$ one associates a homogeneous subalgebra $A(u):=\oplus_{r\ge 0} A_{ru}$. Note that the subalgebra $A(u)$ is $\ZZ_{\ge 0}$-graded. 

Further, with any weights $u_1$ and $u_2$ one associates a homogeneous subalgebra $A(u_1,u_2)$ in $A(u_1+u_2)$ defined as
$$
A(u_1,u_2):=\bigoplus_{r\ge 0} A_{ru_1}\cdot A_{ru_2}. 
$$

We say that a homogeneous subalgebra $B$ of a $\ZZ_{\ge 0}$-graded algebra $A$ is \emph{big}, if the radical of the ideal $B_+:=\oplus_{r>0} B_r$ coincides with $A_+:=\oplus_{r>0} A_r$. 

\smallskip

Let $A$ be the polynomial algebra $\KK[x_1,\ldots,x_n]$ with $\ZZ^m$-grading given by a projection $\pi\colon\ZZ^n\to\ZZ^m$. For every $v\in\ZZ^n_{\ge 0}$ we denote by $x^v$ the monomial $x_1^{v_1}\ldots x_n^{v_n}$. 

\begin{proposition} \label{oneprop} 
Condition (P1) on a pair of weights $(u_1,u_2)$ is equivalent to each of the conditions:
\begin{enumerate}
\item[(1)]
the weights $u_1,u_2$ lie in a common  GIT-cone;
\item[(2)]
the subalgebra $A(u_1,u_2)$ is big in $A(u_1+u_2)$. 
\end{enumerate}
\end{proposition}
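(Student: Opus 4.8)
The plan is to translate all three conditions into a single equality of rational cones in $\QQ^n_{\ge 0}$, and then to read off the equivalences. Recall that $A_u$ has a $\KK$-basis consisting of the monomials $x^v$ with $v\in P(u)\cap\ZZ^n$. Hence $A(u_1+u_2)$ is the affine semigroup algebra $\KK[M]$ of the monoid $M=\{v\in\ZZ^n_{\ge 0}\ :\ \pi(v)\in\ZZ_{\ge 0}(u_1+u_2)\}$, and $A(u_1,u_2)$ is the semigroup algebra $\KK[M']$ of the submonoid $M'$ generated by the vectors $v_1+v_2$ with $v_i\in P(ru_i)\cap\ZZ^n$, $r\ge 0$. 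Writing $\Gamma:=\cone(M)=\bigcup_{r\ge 0}rP(u_1+u_2)$ and $\Gamma':=\cone(M')=\bigcup_{r\ge 0}r\bigl(P(u_1)+P(u_2)\bigr)$, we always have $\Gamma'\subseteq\Gamma$, and the polyhedra $P(u_1)+P(u_2)$ and $P(u_1+u_2)$ are exactly the slices $\{\pi=u_1+u_2\}$ of these cones. Therefore (P1) is equivalent to the cone equality $\Gamma'=\Gamma$, which is the statement I will match to (1) and (2).

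First I would prove (P1) $\Leftrightarrow$ (2). Since $A(u_1+u_2)_+$ is a prime (hence radical) ideal, bigness of $A(u_1,u_2)$ means that the monomial ideals $(A(u_1,u_2))_+$ and $(A(u_1+u_2))_+$ have the same zero set in $X'=\Spec A(u_1+u_2)$. A closed point of the affine toric variety $X'$ is a monoid homomorphism $\phi\colon M\to(\KK,\cdot)$, and its support is $M\cap\Phi$ for a uniquely determined face $\Phi$ of $\Gamma$; conversely every face of $\Gamma$ arises in this way. The vanishing locus of $(A(u_1+u_2))_+$ is the orbit attached to the tail face $\sigma$ (the $r=0$ locus), while $\phi$ kills $(A(u_1,u_2))_+$ exactly when $\Phi\cap M'\subseteq\sigma$. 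Thus bigness holds if and only if every face $\Phi$ of $\Gamma$ not contained in $\sigma$ meets $M'$ outside $\sigma$. Applying this to the extreme rays of the pointed cone $\Gamma$ that pass through the vertices of $P(u_1+u_2)$ forces each such vertex into $P(u_1)+P(u_2)$; since these vertices together with the tail cone $\sigma$ generate $P(u_1+u_2)$, we get $\Gamma=\Gamma'$, i.e. (P1). The converse implication is immediate, as $\Gamma=\Gamma'$ makes the two ideals literally coincide.

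It remains to identify the cone equality $\Gamma=\Gamma'$, equivalently (P1), with the GIT-cone condition (1). Here I would use that the orbit cones of $X=\Spec A$ are precisely $\omega_I=\cone(\pi(e_i)\ :\ i\in I)$ for $I\subseteq\{1,\dots,n\}$, and that these govern the combinatorial type of the fiber polyhedron: the face of $P(u)$ cut out by the vanishing of the coordinates outside $I$ is nonempty if and only if $u\in\omega_I$. Consequently the GIT-fan $\Lambda(A)$ of \cite{BH} is the chamber complex of the configuration $\{\pi(e_i)\}$, that is, the coarsest common refinement of the normal fans $\NNN(P(u))$, $u\in C$. On the relative interior of a GIT-cone the normal fan $\NNN(P(u))$ is constant, so for $u_1,u_2$ in a common GIT-cone the support functions add, $h_{P(u_1)}+h_{P(u_2)}=h_{P(u_1+u_2)}$, giving (P1); conversely, if $u_1,u_2$ lie in distinct GIT-cones the normal fan jumps at $u_1+u_2$ and the sum is strictly larger. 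Via the parametric linear program $h_{P(u)}(\ell)=\min\{\langle y,u\rangle\ :\ \langle y,\pi(e_j)\rangle\ge\ell_j\ \forall j\}$, whose regions of linearity in $u$, intersected over all $\ell$, are exactly the GIT-cones, additivity of $h$ for every $\ell$ is equivalent to $u_1,u_2$ lying in a common GIT-cone; this I would base on the relevant propositions of \cite{AH,BH}.

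The main obstacle is this last step: matching the intrinsic GIT-fan combinatorics with the cone equality $\Gamma=\Gamma'$. The delicate point is that $P(u_1+u_2)$ is taken at the sum weight, where the normal fan can differ from the common refinement of $\NNN(P(u_1))$ and $\NNN(P(u_2))$; controlling precisely when this jump occurs is the content of the GIT-cone condition, and making the identification of $\Lambda(A)$ with the chamber complex rigorous, together with the passage between real cones and lattice monoids and the bookkeeping of the tail face $\sigma$, is where I expect the real work to lie. By contrast the equivalence (P1) $\Leftrightarrow$ (2) is robust, because the radical in the definition of bigness saturates automatically and reduces the whole question to the extreme rays of $\Gamma$, bypassing any lattice-index subtleties.
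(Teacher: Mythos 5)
Your proof of (P1) $\Leftrightarrow$ (2) is correct in substance but takes a genuinely heavier route than the paper. The paper argues purely with monomials and radicals: if $v\in P(u_1+u_2)$, a rational decomposition $v=v'+v''$ with $v'\in P(u_1)$, $v''\in P(u_2)$ becomes a lattice decomposition after clearing denominators, putting $x^{qv}$ into $A(u_1,u_2)_+$; conversely $x^{tsv}\in A(u_1,u_2)_+$ gives a lattice decomposition of $tsv$, and dividing by $ts$ yields $v\in P(u_1)+P(u_2)$ --- no Nullstellensatz, no toric points. Your version (closed points of $\Spec A(u_1+u_2)$ as monoid homomorphisms, supports realizing the faces of $\Gamma$, reduction of bigness to the extreme rays through the vertices of $P(u_1+u_2)$, then $\sigma\subseteq\Gamma'$ plus convexity of $\Gamma'$) does work: $A(u_1+u_2)_+$ is indeed prime since the quotient is the domain $A_0$, and the monoid $M$ is finitely generated by Gordan's lemma, so the Nullstellensatz applies. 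But one sentence is false as written: $\Gamma=\Gamma'$ does \emph{not} make the two ideals ``literally coincide'' --- the monoids $M'\subseteq M$ can differ at the lattice level even when their cones agree, and this is exactly the distinction between (P1) and (P2) that drives Section~\ref{sec2}. What you need (and what your setup gives, by clearing denominators) is only that every positive-degree element of $M$ has a multiple in $M'$, i.e.\ equality of radicals.

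The genuine gap is in your handling of (1), and you partly diagnose it yourself. Note first that the paper never proves (P1) $\Leftrightarrow$ (1) directly: it cites \cite[Proposition~2.1]{AH} for (1) $\Leftrightarrow$ (2) and composes with its own (P1) $\Leftrightarrow$ (2). You instead attack (P1) $\Leftrightarrow$ (1) head-on through the chamber complex, and there the two pivotal claims are unproven: (i) the identification of $\Lambda(A)$ with the linearity domains of $u\mapsto P(u)$ --- and your formulation that $\Lambda(A)$ is ``the coarsest common refinement of the normal fans $\NNN(P(u))$'' is not even well-formed, since those normal fans live in the $d$-dimensional fiber direction while $\Lambda(A)$ lives in $\QQ^m$; the correct statement is that $\Lambda(A)$ refines the collection of orbit cones $\omega_I$, with the normal fan of $P(u)$ constant on each $\lambda^\circ$, a Gale-dual fact that itself requires the \cite{BH} theory; and (ii) the converse ``the normal fan jumps, so $P(u_1+u_2)$ is strictly larger,'' which is only asserted. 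Ironically, the lemma you do state --- the face of $P(u)$ supported in $I$ is nonempty iff $u\in\omega_I$ --- yields (P1) $\Rightarrow$ (1) in two lines: if $u_1+u_2\in\omega_I$, pick $v\in P(u_1+u_2)$ with $\Supp(v)\subseteq I$ and decompose $v=v'+v''$ by (P1); non-negativity of coordinates forces $\Supp(v'),\Supp(v'')\subseteq I$, so $u_1,u_2\in\omega_I$, whence $u_1,u_2\in\lambda(u_1+u_2)$. The hard direction (1) $\Rightarrow$ (P1) is precisely \cite[Proposition~2.1]{AH} composed with (2) $\Rightarrow$ (P1); deferring to that citation, as you say you would, is legitimate --- it is exactly what the paper does --- but then the chamber-complex apparatus is unnecessary, and as written your direct argument for this direction does not close.
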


\begin{proof}
The equivalence of conditions (1) and (2) is proved in~\cite[Proposition~2.1]{AH}.

Let us prove that (P1) implies (2). Assume that $P(u_1+u_2)=P(u_1)+P(u_2)$ and take a monomial $x^v\in A(u_1+u_2)$. We have to prove that $x^v$ is contained in the radical of the ideal $A(u_1,u_2)_+$, i.e., there is a positive integer $q$ such that $x^{qv}\in A(u_1,u_2)_+$. By assumption, we have $v=v'+v''$ with $v'\in P(u_1)$ and $v''\in P(u_2)$. Then there is $q\in\ZZ_{>0}$ such that $qv=qv'+qv''$ with $qv'\in P(u_1)\cap\ZZ^n$ and $qv''\in P(u_2)\cap\ZZ^n$. This proves the assertion. 

Now we come to implication (2) $\Rightarrow$ (P1). Note that $v\in P(u_1+u_2)$ if and only if there is $s\in\ZZ_{> 0}$ such that $sv\in P(s(u_1+u_2))\cap\ZZ^n$ or, equivalently, the monomial $x^{sv}$ lies in $A(u_1+u_2)_+$. By assumption, this implies that $x^{sv}$ is contained in the radical of $A(u_1,u_2)_+$, or there exists $t\in\ZZ_{>0}$ such that
$x^{tsv}\in A(u_1,u_2)_+$. The later condition means that there are $v'\in P(tsu_1)\cap\ZZ^n$ and $v''\in P(tsu_2)\cap\ZZ^n$ such that $tsv=v'+v''$. This condition implies $v\in P(u_1)+P(u_2)$. 
\end{proof}


\section{Positive results on normal location} 
\label{sec4}

We keep the notation introduced in the previous section. In particular, every surjective homomorphism of lattices $\pi\colon \ZZ^n\to \ZZ^m$ gives rise to a $\ZZ^m$-grading on the algebra
$\KK[x_1,\ldots,x_n]$, and we speak about the weight cone, the orbit cones and the GIT-cones corresponding to this grading. In this situation, the weight cone coincides with 
$C=\pi(\QQ^n_{\ge 0})$ and the orbits cones are precisely the cones generated by all subsets of the set $\{w_1,\ldots,w_n\}$, where $w_i:=\pi(e_i)$. 

Let $\sigma$ be a pointed cone in $\QQ^d$. Taking an appropriate basis in $\QQ^d$ we may assume that $\sigma$ is contained in $\QQ^d_{\ge 0}$. Moreover, to any $\sigma$-polyhedra $Q_1$ and $Q_2$ we may apply a parallel translation and assume that $Q_1$ and $Q_2$ are contained in the open octant $\QQ^d_{>0}$. 

\begin{proposition} \label{ptwo}
Let $Q_1$ and $Q_2$ be two $\sigma$-polyhedra in $\QQ^d$ of full dimension. Then there are positive integers $n$ and $m$ with $d=n-m$ and a surjective homomorphism $\pi\colon \ZZ^n\to \ZZ^m$ such that $Q_1=P(u_1)$ and $Q_2=P(u_2)$ for some points $u_1,u_2\in C\cap\ZZ^m$ lying in a common GIT-cone. 
\end{proposition}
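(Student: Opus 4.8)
The plan is to realize $Q_1$ and $Q_2$ as fibers of one and the same projection by the classical slack-variable construction, and then to read off the required location of $u_1,u_2$ from Proposition~\ref{oneprop}; the nontrivial part is the lattice bookkeeping, everything algebraic being absorbed into the already-established equivalence (P1)$\Leftrightarrow$(1) of that proposition.

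First I would describe both polyhedra by a common system of facet normals. Let $\Sigma$ be the coarsest common refinement of $\NNN(Q_1)$ and $\NNN(Q_2)$, which coincides with $\NNN(Q_1+Q_2)$; its support is $\sigma^{\lor}$, and this cone is full-dimensional because $\sigma$ is pointed. Let $\ell_1,\ldots,\ell_n$ be the primitive generators of the rays of $\Sigma$. Since $\Sigma$ refines each $\NNN(Q_j)$, the support function $w\mapsto\max_{w'\in Q_j}\langle\ell,w'\rangle$ is linear on every cone of $\Sigma$, so putting $c_i^{(j)}:=\max_{w\in Q_j}\langle\ell_i,w\rangle\in\ZZ$ (the maximum is attained at a vertex, hence integral) we obtain
$$
Q_j=\{\,w\in\QQ^d : \langle\ell_i,w\rangle\le c_i^{(j)},\ i=1,\ldots,n\,\},\qquad j=1,2.
$$
I then define the affine map $\iota_j\colon\QQ^d\to\QQ^n$ by $\iota_j(w)_i:=c_i^{(j)}-\langle\ell_i,w\rangle$. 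Because the $\ell_i$ span $(\QQ^d)^*$, the common linear part $A\colon w\mapsto(-\langle\ell_i,w\rangle)_i$ is injective; I set $\ker\pi:=A(\QQ^d)$, a $d$-dimensional subspace, $m:=n-d$, and let $\pi\colon\QQ^n\to\QQ^m$ be the projection with this kernel. Then $\iota_j(\QQ^d)=A(\QQ^d)+c^{(j)}=\pi^{-1}(u_j)$ for $u_j:=\pi(c^{(j)})$, and intersecting with $\QQ^n_{\ge 0}$ gives $P(u_j)=\iota_j(Q_j)$, so $\iota_j$ identifies $Q_j$ with $P(u_j)$ as rational polyhedra, with $d=n-m$ as required.

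The step I expect to be the main obstacle is to make this identification respect the lattices, that is, to guarantee that $\iota_j$ carries $\ZZ^d$ onto $\pi^{-1}(u_j)\cap\ZZ^n$, so that $Q_1=P(u_1)$ and $Q_2=P(u_2)$ hold for the \emph{lattice} polyhedra. Taking $\pi$ to be the quotient by the saturation $\ker\pi\cap\ZZ^n$ (which makes $\ZZ^m$ torsion-free and $\pi$ surjective) and recalling $c^{(j)}\in\ZZ^n$, this reduces to the saturation condition $A(\ZZ^d)=\ker\pi\cap\ZZ^n$, equivalently to the $\ell_i$ generating the dual lattice $(\ZZ^d)^*$. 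This fails in general for the rays of $\Sigma$ alone. To enforce it I would replace $\Sigma$ by a refinement $\Sigma'$ with the same support $\sigma^{\lor}$ whose primitive ray generators generate $(\ZZ^d)^*$ — for instance a unimodular refinement, which exists since $\sigma^{\lor}$ is a pointed full-dimensional cone and each of its maximal cones then contributes a lattice basis — and use all rays of $\Sigma'$ as the normal system. Imposing the extra inequalities $\langle\ell,w\rangle\le\max_{w'\in Q_j}\langle\ell,w'\rangle$ for the new rays is redundant and changes neither $Q_1$, $Q_2$ nor $Q_1+Q_2$, while now the normals generate $(\ZZ^d)^*$, yielding the saturation and hence the matching of lattice points.

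Finally I would verify that $u_1,u_2$ lie in a common GIT-cone via Proposition~\ref{oneprop}. As $Q_1$ and $Q_2$ are written with the common normals $\ell_i$, support functions add, so $\max_{w\in Q_1+Q_2}\langle\ell_i,w\rangle=c_i^{(1)}+c_i^{(2)}$, and the polyhedron cut out by $\langle\ell_i,w\rangle\le c_i^{(1)}+c_i^{(2)}$ is simultaneously $Q_1+Q_2$ and, under the affine map with linear part $A$ and translation $c^{(1)}+c^{(2)}$, the fiber $P(u_1+u_2)$. Therefore $P(u_1)+P(u_2)=Q_1+Q_2=P(u_1+u_2)$, which is precisely condition (P1). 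By Proposition~\ref{oneprop} condition (P1) is equivalent to $u_1,u_2$ lying in a common GIT-cone, which completes the proof.
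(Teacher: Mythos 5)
Your proof is correct, but it takes a genuinely different route from the paper's at the key construction. The paper augments the original coordinates with slack variables: it sets $n=d+m$ with $m$ the number of rays of the coarsest common refinement, and defines $\pi(x_1,\ldots,x_d,y_1,\ldots,y_m)=(y_1+l_1(x),\ldots,y_m+l_m(x))$, so that $\ker\pi$ is the graph $\{(x,-l(x))\}$ and is \emph{automatically} saturated in $\ZZ^n$: a lattice point of a fiber has integral $x$, whence integral $y=u-l(x)$. This sidesteps exactly the issue you correctly identify as the crux of your version: in your pure-slack embedding $w\mapsto (c_i^{(j)}-\langle \ell_i,w\rangle)_i$ the ambient dimension is the number of rays, and saturation of $A(\ZZ^d)$ in $\ZZ^n$ is equivalent to the $\ell_i$ generating the dual lattice $(\ZZ^d)^*$, which indeed fails in general and forces your refinement step. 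That step is sound --- the added inequalities are valid hence redundant, and a unimodular refinement exists --- but your stated justification is slightly off: $\sigma^{\lor}$ is full-dimensional because $\sigma$ is pointed, yet it need not itself be pointed (for polytopes $\sigma=\{0\}$ and $\sigma^{\lor}=(\QQ^d)^*$); what makes toric resolution applicable is that all cones of $\Sigma$ are pointed, since $Q_1+Q_2$ is full-dimensional. (Alternatively, one can skip resolution altogether and simply append finitely many redundant functionals from $\sigma^{\lor}$ that together with the rays generate $(\ZZ^d)^*$; this also guarantees $m=n-d>0$ in the degenerate case where the rays already form a lattice basis.) From there both arguments conclude identically: additivity of support functions gives $P(u_1)+P(u_2)=Q_1+Q_2=P(u_1+u_2)$, i.e.\ condition (P1), and Proposition~\ref{oneprop} places $u_1,u_2$ in a common GIT-cone. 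What the paper's coordinates buy is that they are reused verbatim in Proposition~\ref{int} (positivity of the $x_i$ after translating into the open octant); your model would support an analogous and in fact cleaner statement --- a coordinate of a point of $P(u_j)$ vanishes iff the corresponding $\ell_i$ attains its maximum there --- but the subsequent proofs would then have to be rewritten in those terms.
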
 

\begin{proof}
Let $\NNN(Q_1)$ and $\NNN(Q_2)$ be the normal fans of the polyhedra $Q_1$ and $Q_2$, respectively. Denote by $\NNN$ the coarsest common refinement of the fans $\NNN(Q_1)$ and $\NNN(Q_2)$. 

Let $m$ be the number of rays of the fan $\NNN$ and $l_1,\ldots,l_m$ be linear functions on $\QQ^d$ generating the rays of $\NNN$. Denote by $a_i$ (resp. by $b_i$) the maximal value of $l_i$ on $Q_1$ (resp. on $Q_2)$. Then the polyhedron $Q_1$ (resp. $Q_2$) is given by inequalities $l_i\le a_i$ (resp. $l_i\le b_i$) with $i=1,\ldots,m$. The fan $\NNN$ is the normal fan of the polyhedron $Q_1+Q_2$, so the polyhedron $Q_1+Q_2$ is given by inequalities $l_i\le a_i+b_i$, $i=1,\ldots,m$. 

We may assume that the linear functions $l_1,\ldots,l_m$ have integer coefficients. Let $n=d+m$ and consider the projection $\pi\colon\ZZ^n\to\ZZ^m$ given by
$$
(x_1,\ldots,x_d,y_1,\ldots,y_m) \to (y_1 + l_1,\ldots,y_m + l_m). 
$$
Let $u_1=(a_1,\ldots,a_m)$. Then the polyhedron $P(u_1)$ is given by conditions
$$
x_1\ge 0,\ldots,x_d\ge 0,\ y_1\ge 0,\ldots, y_m\ge 0, \ y_1+ l_1=a_1,\ldots, y_m + l_m=a_m.
$$
This system is equivalent to 
$$
x_1\ge 0,\ldots,x_d\ge 0,\  l_1\le a_1,\ldots,l_m\le a_m. 
$$
It proves that $P(u_1)=Q_1$. The same arguments show that 
$$
P(u_2)=Q_2 \quad \text{and} \quad P(u_1+u_2)=Q_1+Q_2. 
$$
Finally, by Proposition~\ref{oneprop} the condition 
$P(u_1)+P(u_2)=P(u_1+u_2)$ implies that $u_1$ and $u_2$ lie in a common GIT-cone. 
\end{proof} 

Let us denote by $\NNN_1$ (resp. $\NNN_2$) the normal fan of the polyhedron $P(u_1)$ (resp. $P(u_2)$) living in the space $\QQ^d$. 

\begin{proposition} \label{int}
The fan $\NNN_1$ refines the fan $\NNN_2$ if and only if $u_1$ is an interior point of a GIT-cone containing $u_2$. 
\end{proposition}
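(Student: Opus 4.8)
The plan is to reformulate the statement and then treat the two implications separately. First I would observe that, since the GIT-fan $\Lambda(A)$ is a fan, every weight lies in the relative interior of a unique GIT-cone; hence $u_1\in\lambda^\circ$ forces $\lambda=\lambda(u_1)$, and the hypothesis ``$u_1$ is an interior point of a GIT-cone containing $u_2$'' is equivalent to the single condition $u_2\in\lambda(u_1)$. So the task is to prove that $\NNN_1$ refines $\NNN_2$ if and only if $u_2\in\lambda(u_1)$. The engine of the whole argument is a description of the normal fan, which I would isolate as a lemma: for $u\in C$ the cones of $\NNN(P(u))$ are exactly the cones $\cone(-\bar e_i;\, i\in S)$ indexed by those $S\subseteq\{1,\dots,n\}$ with $u\in\relint\cone(w_i;\, i\notin S)$, where $\bar e_i$ denotes the restriction of the $i$-th coordinate function to $\ker\pi_\QQ\cong\QQ^d$. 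Indeed, a nonempty face of $P(u)$ is cut out by forcing some coordinates to vanish, its exact vanishing set $S$ being governed by membership of $u$ in the relative interior of the orbit cone $\cone(w_i;\, i\notin S)$, and the associated normal cone is spanned by the corresponding $-\bar e_i$. The key consequence is that the combinatorial position of $u$ with respect to all orbit cones is constant on $\lambda(u)^\circ$ (two weights in one relatively interior GIT-cell lie in exactly the same orbit cones, in the same facial position); hence $\lambda(u)=\lambda(u')$ implies $\NNN(P(u))=\NNN(P(u'))$.

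For the implication $u_2\in\lambda(u_1)\Rightarrow\NNN_1$ refines $\NNN_2$, set $\lambda:=\lambda(u_1)$, so that $u_1\in\lambda^\circ$ and $u_2\in\lambda$. Since $u_1,u_2$ lie in the common GIT-cone $\lambda$, Proposition~\ref{oneprop} yields property (P1), that is $P(u_1)+P(u_2)=P(u_1+u_2)$. As recalled in Section~\ref{sec3}, the coarsest common refinement of $\NNN_1$ and $\NNN_2$ is the normal fan of the Minkowski sum, so it equals $\NNN(P(u_1+u_2))$. Now a relatively interior point absorbs the whole cone under addition, so $u_1+u_2\in\lambda^\circ$; hence $\lambda(u_1+u_2)=\lambda=\lambda(u_1)$, and the engine lemma gives $\NNN(P(u_1+u_2))=\NNN(P(u_1))=\NNN_1$. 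Therefore the coarsest common refinement of $\NNN_1$ and $\NNN_2$ is $\NNN_1$ itself, which is precisely the statement that $\NNN_1$ refines $\NNN_2$.

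For the converse I would argue by contraposition: assuming $u_2\notin\lambda(u_1)$, I produce a cone of $\NNN_1$ contained in no cone of $\NNN_2$. Since $\lambda(u_1)$ is the intersection of all orbit cones containing $u_1$, some orbit cone contains $u_1$ but not $u_2$; replacing it by the face in whose relative interior $u_1$ lies, I obtain an orbit cone $\omega^*=\cone(w_i;\, i\in A)$ with $u_1\in\relint\omega^*$ and $u_2\notin\omega^*$. By the engine lemma $\nu^*:=\cone(-\bar e_i;\, i\in A^c)$ is a cone of $\NNN_1$. If $\NNN_1$ refined $\NNN_2$, then $\nu^*$ would lie in a cone $\nu'=\cone(-\bar e_i;\, i\in B^c)$ of $\NNN_2$ with $u_2\in\relint\cone(w_i;\, i\in B)$. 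Because the $\bar e_i$ form the Gale transform of the $w_i$, the inclusion $\nu^*\subseteq\nu'$ dualizes to the reverse inclusion $\cone(w_i;\, i\in B)\subseteq\cone(w_i;\, i\in A)=\omega^*$ of the complementary orbit cones; then $u_2\in\cone(w_i;\, i\in B)\subseteq\omega^*$, contradicting $u_2\notin\omega^*$. Hence $\NNN_1$ does not refine $\NNN_2$, which completes the contrapositive.

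The main obstacle is the engine lemma together with the duality step in the converse. Recording the faces of $P(u)$ is routine when the $w_i$ are in general position, so that every orbit cone is simplicial and every vertex of $P(u)$ is simple; the difficulty is the presence of linear dependencies among $w_1,\dots,w_n$, which create non-simplicial orbit cones, non-simple vertices, and faces whose exact vanishing set is strictly larger than one might naively guess. I would dispose of this by proving the precise equivalence ``$S$ is the exact vanishing set of a face of $P(u)$ if and only if $u\in\relint\cone(w_i;\, i\notin S)$'', using that the relative interior of a finitely generated cone consists of its strictly positive combinations, and then checking that $S\mapsto\cone(-\bar e_i;\, i\in S)$ is an inclusion-reversing bijection onto the cones of the normal fan. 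This Gale-type inclusion-reversal, which powers the contradiction in the converse, is exactly where the dependencies must be controlled, and I expect it to be the most delicate part of the write-up; everything else then follows formally from the engine lemma, the identification of the coarsest common refinement with the normal fan of the Minkowski sum, and Proposition~\ref{oneprop}.
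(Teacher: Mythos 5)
Your reduction to $u_2\in\lambda(u_1)$, your ``engine lemma'' describing $\NNN(P(u))$ via the sets $S$ with $u\in\relint\cone(w_i;\ i\notin S)$, and your forward implication are all correct; the forward half is even a genuinely different route from the paper's (the paper proves both directions simultaneously by translating the GIT condition into ``for every $v_1\in P(u_1)$ there is $v_2\in P(u_2)$ with $Z(v_1)\subseteq Z(v_2)$'' and reading this off in the explicit coordinates of the construction of Proposition~\ref{ptwo}). But the converse direction has a fatal gap: the Gale-type inclusion-reversal ``$\nu^*\subseteq\nu'$ implies $\cone(w_i;\ i\in B)\subseteq\cone(w_i;\ i\in A)$'' is false, and in fact the biconditional is false in the generality in which you set it up, namely for an arbitrary surjection $\pi$. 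Take $n=3$, $m=2$, $w_1=(1,0)$, $w_2=(1,1)$, $w_3=(0,1)$, $u_1=(2,1)$, $u_2=(1,2)$. Here $d=1$ and, identifying $\Ker\pi$ with $\QQ$ via $t\mapsto t(1,-1,1)$, one gets $\bar e_1=1$, $\bar e_2=-1$, $\bar e_3=1$. Both $P(u_1)$ and $P(u_2)$ are segments, full-dimensional in the fiber, so $\NNN_1=\NNN_2$ is the complete fan in $\QQ^1$ and refinement holds trivially; yet $\lambda(u_1)=\cone(w_1,w_2)$ and $u_2=(1,2)=2w_2-w_1\notin\lambda(u_1)$, so the GIT condition fails. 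Running your argument on this example: with $A=\{1,2\}$ (so $u_1\in\relint\cone(w_1,w_2)$, $u_2\notin\omega^*$) and $B=\{2,3\}$ (so $u_2\in\relint\cone(w_2,w_3)$) one has $\nu^*=\cone(-\bar e_3)=\QQ_{\le 0}=\cone(-\bar e_1)=\nu'$, so $\nu^*\subseteq\nu'$, while $\cone(w_2,w_3)\not\subseteq\cone(w_1,w_2)$. The mechanism of failure is exactly the dependencies you flagged as delicate: because the $\bar e_i$ can coincide or be dependent, a cone of $\NNN_2$ is $\cone(-\bar e_i;\ i\in B^c)$ for an index set $B^c$ strictly smaller than the saturated set $\{i:\ -\bar e_i\in\nu'\}$, and the saturated set need not be realizable as a vanishing set for $u_2$. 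Indeed, all that $\nu^*\subseteq\nu'$ gives you, by dualizing inside $\Ker\pi$, is that every $r\in P(u_2)$ satisfies $r_i\ge b_i$ for all $i\in A^c$ (where $u_2=\sum_{i\in B}b_iw_i$, $b_i>0$) --- a statement perfectly consistent with $u_2\notin\omega^*$, so no contradiction can be extracted.

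What makes the actual Proposition true is that it is not a statement about an arbitrary $\pi$: it concerns the specific projection built in Proposition~\ref{ptwo}, in which $l_1,\dots,l_m$ exhaust the rays of the coarsest common refinement of $\NNN_1$ and $\NNN_2$, the coordinates $x_1,\dots,x_d$ are strictly positive on both fibers, and the entries of $u_1,u_2$ are the exact support values $a_i=\max_{Q_1}l_i$ and $b_i=\max_{Q_2}l_i$. Under these hypotheses $y_i$ vanishes at $v$ precisely when $l_i\in\tau(v)$, so every relevant index set is automatically saturated, every cone of $\NNN_1$ and $\NNN_2$ is generated by exactly the set of $l_i$ it contains, and hence cone inclusion is equivalent to index-set inclusion --- which is the step your Gale duality claim was supposed to supply but cannot. (Note that the configuration $\{w_1,w_2,w_3\}$ in the counterexample above does not arise from the construction of Proposition~\ref{ptwo}; applied to two segments, that construction would produce $\{(1,-1),(1,0),(0,1)\}$, for which the statement does hold.) So your write-up needs to be anchored to the Proposition~\ref{ptwo} realization, at which point the converse becomes the paper's argument: $Z(v_1)\subseteq Z(v_2)$ for the $y$-coordinates, rather than an abstract Gale-dual inclusion-reversal.
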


\begin{proof}
For any point $c=(c_1,\ldots,c_n)\in\QQ^n$, we define a subset in $\{1,\ldots,n\}$ as
$$
Z(c)=\{ i \  | \ c_i=0 \}.
$$  
The condition that $u_1$ is an interior point of a GIT-cone containing $u_2$ means that every orbit cone, which contains $u_1$, contains $u_2$ as well. Note that the coordinates of a point
in $P(u_1)$ (resp. $P(u_2)$) may be considered as coefficients of a linear combination of the vectors $w_1,\ldots,w_n$ that is equal to $u_1$ (resp. $u_2$). Taking this into account, we may reformulate the above condition as: for every point $v_1\in P(u_1)$ there is a point $v_2\in P(u_2)$ such that $Z(v_1)\subseteq Z(v_2)$. 

Since we assume that the polyhedra are contained in the open octant $\QQ^d_{>0}$, the coordinates $x_1,\ldots,x_d$ are positive on $P(u_1)$ and  $P(u_2)$. At the same time, for $y_1,\ldots,y_m$ the coordinate $y_i$ is zero at a point $v_1$ (resp. $v_2$) if and only if the linear function $l_i$ reaches its maximum $a_i$ on $P(u_1)$ at $v_1$ (resp. $b_i$ on $P(u_2)$ at $v_2$). The condition $Z(v_1)\subseteq Z(v_2)$ means that every function $l_i$ which is contained in the cone $\tau(v_1)$ of the normal fan $\NNN_1$ (see Section~\ref{sec3}) is also contained in the cone $\tau(v_2)$ of the normal fan $\NNN_2$. Since the rays of the fans $\NNN_1$ and $\NNN_2$ are generated by some of the vectors $l_i$ and every cone is uniquely determined by the set of rays it contains, the last condition means that every cone in $\NNN_1$ is contained in a cone of $\NNN_2$ or, equivalently, the fan $\NNN_1$ refines the fan $\NNN_2$. 
\end{proof} 

The next theorem generalizes~\cite[Theorem~1.1]{HNPS} from dimension $2$ to higher dimensions.  

\begin{theorem} \label{mcrit}
Let $Q_1$ and $Q_2$ be two $\sigma$-polyhedra in $\QQ^d$ of full dimension.  Assume that the normal fan $\NNN(Q_1)$ refines the normal fan $\NNN(Q_2)$. Then there exists a positive integer $k$ such that for every positive integer $s$ the polyhedra $skQ_1$ and $skQ_2$ are normally located. 
\end{theorem}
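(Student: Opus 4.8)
The plan is to translate the geometry of $Q_1$ and $Q_2$ into the graded-algebra setting, apply the surjectivity criterion of Theorem~\ref{maina}, and transport the resulting lattice-point identity back to $\QQ^d$. First I would put everything in normal form. The normal fan is insensitive to translation, and normal location is preserved under simultaneous lattice translations (a lattice point of the shifted Minkowski sum splits precisely when the corresponding point of the original sum does). Since the common tail cone $\sigma$ lies in $\QQ^d_{\ge 0}$, translating the polytope summands of the lattice $\sigma$-polyhedra $Q_1,Q_2$ into $\QQ^d_{>0}$ by lattice vectors drags the whole polyhedra into the open octant $\QQ^d_{>0}$. Proposition~\ref{ptwo} then supplies a surjection $\pi\colon\ZZ^n\to\ZZ^m$ with $d=n-m$ and weights $u_1,u_2\in C\cap\ZZ^m$ such that $Q_1=P(u_1)$ and $Q_2=P(u_2)$, lying in a common GIT-cone; its proof also yields $P(u_1+u_2)=Q_1+Q_2$.

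Next the refinement hypothesis enters through Proposition~\ref{int}. The fans $\NNN(Q_1)$ and $\NNN(Q_2)$ are exactly the fans $\NNN_1$ and $\NNN_2$ of $P(u_1)$ and $P(u_2)$, so the assumption that $\NNN(Q_1)$ refines $\NNN(Q_2)$ says precisely that $u_1$ lies in the relative interior of a GIT-cone $\lambda$ containing $u_2$. Theorem~\ref{maina}(2) then shows that $(u_1,u_2)$ is a generating pair; for the polynomial algebra graded by $\pi$ this is property (P3), so there is $k\in\ZZ_{>0}$ with
$$
(P(sku_1)\cap\ZZ^n)+(P(sku_2)\cap\ZZ^n)=P(sk(u_1+u_2))\cap\ZZ^n
$$
for every $s\in\ZZ_{>0}$.

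It remains to descend along the coordinate projection $p\colon\QQ^n\to\QQ^d$, $(x,y)\mapsto x$. Since $P(su)=sP(u)$ for positive $s$, the fiber $P(sku_i)$ projects onto $skQ_i$ and $P(sk(u_1+u_2))$ onto $sk(Q_1+Q_2)=skQ_1+skQ_2$. On a fiber the slack coordinate is $y_i=sk a_i-l_i(x)$ (and analogously for $u_2$ and for $u_1+u_2$), with $a_i\in\ZZ$ and the $l_i$ of integer coefficients, so $p$ restricts to a bijection between the lattice points of each fiber polyhedron and the lattice points of the corresponding polyhedron in $\QQ^d$. As $p$ is linear it respects Minkowski sums, so applying $p$ to the displayed identity gives
$$
(skQ_1\cap\ZZ^d)+(skQ_2\cap\ZZ^d)=(skQ_1+skQ_2)\cap\ZZ^d,
$$
that is, $skQ_1$ and $skQ_2$ are normally located; reversing the initial translations keeps this property and completes the proof.

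The step I expect to require the most care is the final descent: checking that $p$ is a lattice-point-preserving, sum-compatible bijection between the fiber polyhedra in $\QQ^n$ and the scaled polyhedra in $\QQ^d$. This is the bridge that turns the algebraic surjectivity statement (P3) into the combinatorial assertion of normal location, and it hinges on the integrality of the $a_i$ and of the defining functionals $l_i$, which keeps the slack variables integral so that projection neither loses nor manufactures lattice points.
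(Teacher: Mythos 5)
Your proof is correct and follows essentially the same route as the paper: realize $Q_i=P(u_i)$ via Proposition~\ref{ptwo}, convert the refinement hypothesis into the relative-interior condition via Proposition~\ref{int}, and apply Theorem~\ref{maina}(2) to obtain property (P3). The descent step you single out as delicate is exactly what the paper absorbs into the identification $P(u_i)=Q_i$ in Proposition~\ref{ptwo} (the slack coordinates $y_i=ska_i-l_i(x)$ are integral at lattice points since the $a_i$ and $l_i$ are integral), so your explicit verification of the lattice-point-preserving, sum-compatible projection is sound detail rather than a different argument.
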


\begin{proof}
Following Proposition~\ref{ptwo}, we realize $Q_1$ (resp. $Q_2$) as $P(u_1)$ (resp. $P(u_2)$). By Theorem~\ref{maina}, it suffices to show that the point $u_1$ lies in the relative interior
of a common GIT-cone of the points $u_1$ and $u_2$. It follows from Proposition~\ref{int}. 
\end{proof} 

\begin{remark}
The construction of two triangles presented in the proof of Proposition~\ref{exm} is extracted from Example~\ref{oldex}: in notation of this example, we take the triangles $P=P(u_1)$ and $Q=P(u_2)$ with respect to the projection $\pi\colon\ZZ^4\to\ZZ^2$.  
\end{remark}


\end{document}